\newtheorem{assumption}{Assumption}
\newtheorem{proposition}{Proposition}
\newtheorem{lemma}{Lemma}
\def\BibTeX{{\rm B\kern-.05em{\sc i\kern-.025em b}\kern-.08em
    T\kern-.1667em\lower.7ex\hbox{E}\kern-.125emX}}
\begin{document}

\title{\huge{Fair and Distributed Dynamic Optimal Transport for\\ Resource Allocation over Networks}
\vspace{-2mm}}

\author{Jason Hughes and Juntao Chen
\thanks{The authors are with the Department of Computer and Information Science, Fordham University, New York, NY, 10023 USA. E-mail: \{jhughes50,jchen504\}@fordham.edu}}

\maketitle

\begin{abstract}
Optimal transport is a framework that facilitates the most efficient allocation of a limited amount of resources. However, the most efficient allocation scheme does not necessarily preserve the most fairness. In this paper, we establish a framework which explicitly considers the fairness of dynamic resource allocation over a network with heterogeneous participants. As computing the transport strategy in a centralized fashion requires significant computational resources, it is imperative to develop computationally light algorithm that can be applied to large scale problems. To this end, we develop a fully distributed algorithm for fair and dynamic optimal transport with provable convergence using alternating method of multipliers. In the designed algorithm, each corresponding pair of resource supplier and receiver compute their own solutions and update the transport schemes through negotiation iteratively which do not require a central planner. The distributed algorithm can yield a fair and efficient resource allocation mechanism over a network. We corroborate the obtained results through case studies.
\end{abstract}


\section{Introduction}

Optimal transport (OT) is a centralized framework that enables the design of efficient schemes for distributing resources by considering heterogeneous constraints between the resource suppliers and receivers \cite{galichon2018optimal}. Efficiency in transporting and distributing resources has long been sought out, such as the optimal dispatch of raw materials in manufacturing, backup of power units in disaster affected neighbors, and matching between employees and tasks in an enterprise network. 

Under the standard OT paradigm, the resource distribution scheme maximizes the aggregated utilities of all participants in a centralized way, regardless of whether that distribution is fair for the resource receivers \cite{you2016energy,zhang2019consensus}. This efficiency maximization paradigm is not suitable for many societal problems. For example, in energy systems, the resilience planning should take into account these generally under considered communities which are hit heavily by natural disasters. Though from the central planner's perspective, the resilience planning in these areas may not contribute as significant as other areas to the system's utility by cost-benefit analysis. Therefore, it is necessary to incorporate fairness during the transport mechanism design for constrained resource allocation, especially in the scenarios that promote social equity.



The transport network that the resources are distributed over becomes more complex with a large number of suppliers and receivers. This large-scale feature of the OT problem gives rise to another concern on the centralized computation of the transport plan. The required computation for centralized planning grows exponentially with the number of participants in the framework. This concern is further intensified if the resource allocation is completed over a period of time sequentially. To this end, we aim to develop a distributed algorithm for fair and efficient dynamic resource allocation where the centralized planner is not necessary. The distributed algorithm is obtained by leveraging alternating method of multipliers (ADMM) approach \cite{boyd2011distributed}.

To enable a fair resource allocation, we include a fairness measure in the objective function in the dynamic OT framework. Therefore, the resulting dynamic transport plan will have a balance between efficiency and fairness. In the designed ADMM-based distributed algorithm, each participant (resource supplier or receiver) only needs to solve its own problem and exchange the results with the corresponding connected agents, which enables parallel updates on the solution. The algorithm terminates when the solution computed at each pair of supplier and receiver coincides, at which point the dynamic transport strategy given by our developed distributed algorithm is the same as the one under centralized design. 

Our distributed algorithm offers insights for fair and efficient dynamic resource distribution over networks. First, the updates of transport strategies at both the supplier side and the receiver side can be seen as bargaining for the resources transfer. The bargaining process ends when both parties reach an agreement. Furthermore, during each update, each receiver node in the network proposes a solution that explicitly considers the fairness. In comparison, the supplier nodes solely focus on maximizing their payoff by selling their resources. At the next round of updates, each pair of supplier and receiver will propose a resource distribution scheme that is closer to the average of their previous solutions. It indicates that, as the bargaining progresses, the resource suppliers will also consider the fairness and the receivers will take into account the efficiency of the transport plan to have a consensus. The algorithm can also be implemented online conveniently by adapting to the changes in the resource allocation network and participants’ preferences.

The contributions of this paper are summarized as follows. First, we establish a framework that can yield fair and efficient dynamic resource transportation over networks. Second, we develop a distributed algorithm based on ADMM to compute the dynamic transport strategy in which the resource suppliers and receivers negotiate iteratively on the strategy. Third, we use case studies to corroborate the effectiveness and applicability to changing environment of the algorithm.







\textit{Related Works:} 
Optimal resource allocation/matching has been investigated vastly in various fields, including communication networks \cite{you2016energy}, energy systems \cite{awad2016optimal}, critical infrastructure \cite{huang2018distributed} and cyber systems \cite{chen2018security}. To compute the optimal transport strategy efficiently, a number of techniques have been developed, such as simultaneous approximation \cite{mirrokni2012simultaneous}, population-based optimization \cite{deb2017population}, and distributed algorithms \cite{zhang2019consensus,niu2013efficient}.
Our work is also related to fair allocation of constrained resources \cite{coucheney2009fair,abdel2014utility}. In this work, we leverage ADMM to develop a fast computational mechanism for fair and efficient dynamic resource matching over large-scale networks.


\section{Problem Formulation}\label{sec:problem}
In this section, we first present a standard dynamic OT framework for limited resource allocation over a network. Then, we extend the framework to a fair OT setting by considering the fairness in the dynamic transport design.

In the network, we denote $\mathcal{X}:=\{1, ..., |\mathcal{X}|\}$ the set of destinations or targets that receive the resources, and $\mathcal{Y}:=\{1, ..., |\mathcal{Y}|\}$ the set of origins or sources that distribute resources to the targets. Specifically, each source node $y\in\mathcal{Y}$ is connected to a number of target nodes denoted by $\mathcal{X}_y$, representing that $y$ has choices in allocating its resources to a specific group of destinations $\mathcal{X}_y$ in the network. Similarly, it is possible that each target node $x\in\mathcal{X}$ receives resources from multiple source nodes, and this set of suppliers to node $x$ is denoted by $\mathcal{Y}_x$. Note that $\mathcal{X}_y$, $\forall y$ and $\mathcal{Y}_x$, $\forall x$ are nonempty. Otherwise, the corresponding nodes are isolated in the network and do not play a role in the considered optimal transport strategy design. It is also straightforward to see that the resources are transported over a bipartite network, where one side of the network consists of all source nodes and the other includes all destination nodes. This bipartite graph may not be complete due to constrained matching policies between participants. Another reason yielding incomplete bipartite graph in practice can be the infeasible transport of resources between certain pairs of source and destination nodes incurred by long transport distance. For convenience, we denote by $\mathcal{E}$ the set including all feasible transport paths in the network, i.e., $\mathcal{E}:=\{\{x,y\}|x\in\mathcal{X}_y,y\in\mathcal{Y}\}$. Note that $\mathcal{E}$ also refers to the set of all edges in the established bipartite graph for resource transportation.

We denote by $\pi_{xy}^t\in\mathbb{R}_+$ the amount of resources transported from the origin node $y\in\mathcal{Y}$ to the destination node $x\in\mathcal{X}$ at time $t$, where $\mathbb{R}_+$ is the set of nonnegative real numbers and $t\in\mathcal{T}:=\{1,2,...,T\}$. For convenience, let $\Pi:=\{\pi_{xy}^t\}_{x\in\mathcal{X}_y,y\in\mathcal{Y},t\in\mathcal{T}}$ be the transport plan designed for the considered network. To this end, the centralized dynamic optimal transport problem can be formulated as follows:
\begin{align}
    \max_{\Pi}\ \sum_{t\in\mathcal{T}}\sum_{x\in\mathcal{X}} \sum_{y\in\mathcal{Y}_x}& d_{xy}(\pi_{xy}^t) + \sum_{t\in\mathcal{T}} \sum_{y\in\mathcal{Y}} \sum_{x\in\mathcal{X}_y} \left(s_{xy}(\pi_{xy}^t) - c_{xy}(\pi_{xy}^t)\right)\notag\\
    \mathrm{s.t.}\quad &\underline{p}_{x}\leq \sum_{t\in\mathcal{T}} \sum_{y\in\mathcal{Y}_x} \pi_{xy}^t\leq \bar{p}_{x},\ \forall x\in\mathcal{X},\notag\\
    &\underline{q}_{y}\leq \sum_{t\in\mathcal{T}}\sum_{x\in\mathcal{X}_y} \pi_{xy}^t\leq \bar{q}_{y},\ \forall y\in\mathcal{Y},\label{OT1:eqn}\\
    &\pi_{xy}^t\geq 0,\ \forall t\in\mathcal{T},\ \forall\{x,y\} \in\mathcal{E},\notag
\end{align}
where $d_{xy}:\mathbb{R}_+\rightarrow\mathbb{R}$ and $s_{xy}:\mathbb{R}_+\rightarrow\mathbb{R}$ are utility functions for destination/target node $x$ and source node $y$, respectively; $c_{xy}:\mathbb{R}_+\rightarrow\mathbb{R}$ is a cost function of source node $y$ for transporting resources to target node $x$. Furthermore, $\bar{p}_x\geq \underline{p}_{x}\geq 0$, $\forall x\in\mathcal{X}$ and $\bar{q}_y\geq \underline{q}_{y}\geq 0$, $\forall y\in\mathcal{Y}$. The constraints $\underline{p}_{x}\leq \sum_{t\in\mathcal{T}}\sum_{y\in\mathcal{Y}_x} \pi_{xy}\leq \bar{p}_{x}$ and $\underline{q}_{y}\leq \sum_{t\in\mathcal{T}}\sum_{x\in\mathcal{X}_y} \pi_{xy}\leq \bar{q}_{y}$ capture the limitations on the amount of requested and transferred resources at the target $x$ and source $y$, respectively. 

We have the following assumption on the utilities functions $d_{xy}$ and $s_{xy}$ and the cost function $c_{xy}$.
\begin{assumption}\label{assump:1}
The utility functions $d_{xy}$ and $s_{xy}$ are concave and monotonically increasing, and the transport cost function $c_{xy}$ is convex and monotonically increasing on $\pi_{xy}^t$, $\forall x\in\mathcal{X},\forall y\in\mathcal{Y}$.
\end{assumption}

There are a number of functions of interest that satisfy the properties in Assumption \ref{assump:1}. For example, the utility functions $d_{xy}$ and $s_{xy}$ can adopt a linear form, indicating a linear growth of payoff on the amount of transferred and consumed resources. $d_{xy}$ and $s_{xy}$ can also take a logarithmic form, representing the marginal utility decreases with the amount of transported resources. The cost function $c_{xy}$ can admit linear and quadratic forms, capturing the flat and increasing growth of transport costs on the resources, respectively.

In the above formulation, there is no consideration of fairness in resource allocation. The central planner devises an optimal transport strategy by maximizing the social welfare. In practice, some target nodes may not contribute as significant as other nodes to the social objective by receiving a certain amount of resources from the sources. This efficient resource allocation plan yields a larger objective value. However, it is not fair for some nodes if their requests for resources are ignored. Therefore, it is urgent to incorporate the equity consideration during resource allocation. One possible way to achieve this goal is to introduce a fairness measure to the objective function in the OT framework as follows:
\begin{equation}\label{obj_fair:eqn}
\begin{aligned}
    \sum_{t\in\mathcal{T}}\sum_{x\in\mathcal{X}} \sum_{y\in\mathcal{Y}_x} d_{xy}(\pi_{xy}^t) +\sum_{t\in\mathcal{T}} \sum_{y\in\mathcal{Y}} \sum_{x\in\mathcal{X}_y} \left(s_{xy}(\pi_{xy}^t) 
    - c_{xy}(\pi_{xy}^t)\right)\\
    +\sum_{x\in\mathcal{X}} \omega_x f_{x}(\sum_{t\in\mathcal{T}}\sum_{y\in\mathcal{Y}_x}\pi_{xy}^t),
\end{aligned}
\end{equation}
where $\omega_x\geq 0$ is a weighting constant for fairness, and $f_x:\mathbb{R}_+\rightarrow \mathbb{R}$. Note that $\sum_{t\in\mathcal{T}}\sum_{y\in\mathcal{Y}_x}\pi_{xy}^t$ is total amount of resources received for the target node $x$ over $T$ periods of time. Thus, $f_{x}(\sum_{t\in\mathcal{T}}\sum_{y\in\mathcal{Y}_x}\pi_{xy}^t)$ quantifies the level of fairness by allocating $\sum_{t\in\mathcal{T}}\sum_{y\in\mathcal{Y}_x}\pi_{xy}^t$ resources to each target $x$. To facilitate a fair transport strategy, the central planner needs to devise $f_x$ strategically. One consideration is that the marginal utility of the fairness term $f_x$ should decrease. Otherwise, it will lead to an unfair distribution of resources, i.e., some target nodes receive most of the resources in the network as the central planner aims to maximize $\sum_{x\in\mathcal{X}} \omega_x f_{x}(\sum_{t\in\mathcal{T}}\sum_{y\in\mathcal{Y}_x}\pi_{xy}^t)$.

We have the following assumption on the property of fairness function.
\begin{assumption}\label{assump:2}
The fairness function $f_x$, $\forall x\in\mathcal{X}$ is concave and monotonically increasing. 
\end{assumption}

There can be various choices for the fairness function. One possible choice is a proportional fairness function \cite{abdel2014utility}:
\begin{equation}\label{fairness:eqn}
    f_x\Big(\sum_{t\in\mathcal{T}}\sum_{y\in\mathcal{Y}_x}\pi_{xy}^t\Big) = \log\Big(\sum_{t\in\mathcal{T}}\sum_{y\in\mathcal{Y}_x}\pi_{xy}^t+1\Big), \ \forall x\in\mathcal{X}.
\end{equation}
To this end, the central planner's goal is to devise a fair and efficient transport strategy that maximizes the objective function \eqref{obj_fair:eqn} while takes into account the same set of constraints on resources capacity in \eqref{OT1:eqn}.


\section{Distributed Algorithm for Fair and Efficient Dynamic Transport Strategy Design}\label{sec:algorithm}
The planner can solve the formulated optimization problem in Section \ref{sec:problem} in a centralized manner. One primal concern is the computational feasibility. It can be computationally expensive to calculate the fair and efficient resource distribution plan when the number of sources and targets becomes enormous as can be observed in a large-scale network. Therefore, we shift our attention in finding a fair and efficient transport strategy from a centralized way to a fully distributed fashion.

\subsection{Feasibility and Optimality}
Before developing the distributed algorithm, we first analyze the feasibility of the formulated optimization problem. 
\begin{lemma}\label{lem:fea}
It is feasible to find a fair transport plan $\Pi$ if the following conditions are satisfied:
\begin{align}
    \sum_{y\in\mathcal{Y}_x}\bar{q}_y &\geq \underline{p}_x,\quad \forall x\in\mathcal{X},\label{fea:eqn1}\\
    \sum_{y\in\mathcal{Y}} \bar{q}_y & \geq \sum_{x\in\mathcal{X}} \underline{p}_x.\label{fea:eqn2}
\end{align}
\end{lemma}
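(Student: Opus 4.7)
The plan is to exhibit a concrete transport plan that satisfies every constraint in problem \eqref{OT1:eqn}, thereby certifying feasibility directly. Since all the constraints involve only time-aggregated flows, I would first reduce the task to constructing a nonnegative aggregate matrix $\tilde{\pi}_{xy} := \sum_{t\in\mathcal{T}}\pi_{xy}^t$ supported on $\mathcal{E}$ that satisfies $\underline{p}_x \leq \sum_{y\in\mathcal{Y}_x}\tilde{\pi}_{xy}\leq \bar{p}_x$ for every $x$ and $\sum_{x\in\mathcal{X}_y}\tilde{\pi}_{xy}\leq \bar{q}_y$ for every $y$. Any such aggregate can be lifted back to a plan $\Pi$ by a uniform split $\pi_{xy}^t = \tilde{\pi}_{xy}/T$, so the entire feasibility question collapses to a static bipartite flow problem.

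Next, I would cast this static problem as an $s$--$t$ flow on the bipartite graph augmented with a super-source $s$ linked to each $y\in\mathcal{Y}$ via an edge of capacity $\bar{q}_y$, and a super-sink $\tau$ receiving flow from each $x\in\mathcal{X}$ with lower-bound requirement $\underline{p}_x$ and capacity $\bar{p}_x$; edges inside $\mathcal{E}$ are uncapacitated. In this picture, a natural candidate construction is the proportional allocation
\begin{equation*}
    \tilde{\pi}_{xy} = \underline{p}_x \cdot \frac{\bar{q}_y}{\sum_{y'\in\mathcal{Y}_x}\bar{q}_{y'}}, \qquad (x,y)\in\mathcal{E},
\end{equation*}
which is well-defined thanks to the nonempty-neighborhood assumption and which trivially achieves $\sum_{y\in\mathcal{Y}_x}\tilde{\pi}_{xy}=\underline{p}_x$. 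Condition \eqref{fea:eqn1} guarantees the multipliers $\underline{p}_x/\sum_{y'\in\mathcal{Y}_x}\bar{q}_{y'}$ lie in $[0,1]$, which, together with $\underline{p}_x\leq \bar{p}_x$, verifies each target-side constraint.

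The main obstacle is verifying the source-side upper bound $\sum_{x\in\mathcal{X}_y}\tilde{\pi}_{xy}\leq \bar{q}_y$ for every $y$. The per-node proportional formula only directly uses the \emph{local} condition \eqref{fea:eqn1}, while the \emph{global} condition \eqref{fea:eqn2} is what caps the total mass $\sum_y\sum_x\tilde{\pi}_{xy}=\sum_x\underline{p}_x$ by $\sum_y\bar{q}_y$. To bridge this gap I would either (i) appeal to the max-flow/min-cut characterization of feasibility and read off the sufficiency of \eqref{fea:eqn1}--\eqref{fea:eqn2} from the cut structure available here, or (ii) if the direct proportional scheme overloads some $y$, apply a flow-augmentation/rebalancing step that shifts excess mass off saturated sources onto unsaturated ones in $\mathcal{Y}_x$ (which exist by \eqref{fea:eqn2}) until all source capacities are met while each target quota $\underline{p}_x$ is preserved. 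Once a feasible $\tilde{\pi}_{xy}$ is in hand, the lift to $\Pi$ completes the argument.
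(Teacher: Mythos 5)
Your reduction to a static, time-aggregated plan and the proportional candidate are reasonable starting points, but the step you yourself flag as ``the main obstacle'' is a genuine gap, and neither of your proposed repairs can close it. For your candidate, $\sum_{x\in\mathcal{X}_y}\tilde{\pi}_{xy}=\bar{q}_y\sum_{x\in\mathcal{X}_y}\underline{p}_x/\sum_{y'\in\mathcal{Y}_x}\bar{q}_{y'}$: each summand is at most $1$ by \eqref{fea:eqn1}, but their sum need not be, so a source can be overloaded. More fundamentally, the max-flow/min-cut (Gale--Hoffman) characterization you invoke in repair (i) requires the cut condition $\sum_{x\in S}\underline{p}_x\le\sum_{y\in\bigcup_{x\in S}\mathcal{Y}_x}\bar{q}_y$ for \emph{every} subset $S\subseteq\mathcal{X}$; conditions \eqref{fea:eqn1} and \eqref{fea:eqn2} are only the instances $|S|=1$ and $S=\mathcal{X}$ and do not imply the intermediate ones. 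Concretely, take three sources with $\bar{q}_{y_1}=\bar{q}_{y_2}=\bar{q}_{y_3}=1$, targets $x_1,x_2$ with $\underline{p}_{x_1}=\underline{p}_{x_2}=1.5$ each connected only to $\{y_1,y_2\}$, and $x_3$ with $\underline{p}_{x_3}=0$ connected only to $y_3$: both \eqref{fea:eqn1} and \eqref{fea:eqn2} hold, yet $x_1$ and $x_2$ jointly demand $3$ units from a pair of sources that can supply at most $2$, so no rebalancing as in repair (ii) can succeed either. On an incomplete bipartite graph the two stated inequalities are simply not sufficient, so no proof can be completed without strengthening the hypotheses.

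The argument does go through when the bipartite graph is complete (as in the paper's case studies): then $\mathcal{Y}_x=\mathcal{Y}$ for all $x$, and taking $\tilde{\pi}_{xy}=\underline{p}_x\,\bar{q}_y/\sum_{y'\in\mathcal{Y}}\bar{q}_{y'}$ gives $\sum_{x\in\mathcal{X}}\tilde{\pi}_{xy}=\bar{q}_y\sum_{x\in\mathcal{X}}\underline{p}_x/\sum_{y'\in\mathcal{Y}}\bar{q}_{y'}\le\bar{q}_y$ directly from \eqref{fea:eqn2}; this is presumably the intended argument, and note that the paper itself states the lemma without proof, offering only an interpretation of the two inequalities. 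Two further loose ends in your write-up: you silently drop the source-side lower bound $\underline{q}_y\le\sum_{t\in\mathcal{T}}\sum_{x\in\mathcal{X}_y}\pi_{xy}^t$ from \eqref{OT1:eqn}, which neither your construction nor the lemma's hypotheses can accommodate unless $\underline{q}_y=0$; and your formula divides by $\sum_{y'\in\mathcal{Y}_x}\bar{q}_{y'}$, which may vanish, in which case \eqref{fea:eqn1} forces $\underline{p}_x=0$ and you should set $\tilde{\pi}_{xy}=0$ explicitly.
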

The two inequalities in Lemma \ref{lem:fea} have natural interpretations. \eqref{fea:eqn1} ensures that all the target nodes' requests can be fulfilled. \eqref{fea:eqn2} indicates the the total demand of resources is less than the total supply that the source nodes can provide.

We next characterize the existence of optimal solution to the formulated problem.

\begin{lemma}
Under Assumptions \ref{assump:1} and \ref{assump:2}, and the inequalities \eqref{fea:eqn1} and \eqref{fea:eqn2}, there exists a fair and efficient transport strategy that maximizes the objective \eqref{obj_fair:eqn} while satisfying the constraints in \eqref{OT1:eqn}.
\end{lemma}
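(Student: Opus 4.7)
The plan is to invoke the Weierstrass extreme value theorem by establishing three ingredients: (i) the feasible set is nonempty; (ii) the feasible set is compact; (iii) the objective is continuous (and, in fact, concave).

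For (i), the feasible set $\mathcal{F}$ of transport plans $\Pi$ satisfying the constraints in \eqref{OT1:eqn} is nonempty by Lemma \ref{lem:fea}, since \eqref{fea:eqn1} and \eqref{fea:eqn2} are assumed. For (ii), I would note that every $\pi_{xy}^t$ is nonnegative and that the aggregate upper bounds $\sum_{t}\sum_{y\in\mathcal{Y}_x}\pi_{xy}^t \leq \bar{p}_x$ and $\sum_{t}\sum_{x\in\mathcal{X}_y}\pi_{xy}^t \leq \bar{q}_y$ imply $0\leq \pi_{xy}^t \leq \min\{\bar{p}_x,\bar{q}_y\}$ for each $x,y,t$. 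Thus $\mathcal{F}$ is a bounded subset of the finite-dimensional space $\mathbb{R}^{|\mathcal{E}|T}$, and because every defining constraint is a closed (non-strict) linear inequality, $\mathcal{F}$ is also closed. Hence $\mathcal{F}$ is compact.

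For (iii), under Assumptions \ref{assump:1} and \ref{assump:2}, each term $d_{xy}$, $s_{xy}$, $f_x$ is concave and $c_{xy}$ is convex on $\mathbb{R}_+$, so the objective \eqref{obj_fair:eqn} is a finite sum of concave functions (using $-c_{xy}$). A finite-valued concave (or convex) function of a single real variable is continuous on the interior of its domain, and for the standard choices highlighted in the paper (linear, quadratic, $\log(\cdot+1)$) continuity extends to all of $\mathbb{R}_+$; hence the composite objective is continuous on $\mathcal{F}$. Weierstrass then yields a maximizer, proving existence. Moreover, because the objective is concave and $\mathcal{F}$ is convex, the program is a convex optimization, so any local optimum is automatically global—this is the structural fact that will be exploited when deriving the ADMM updates in Section \ref{sec:algorithm}.

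The main obstacle, if any, is only the mild subtlety of continuity on the boundary where some $\pi_{xy}^t=0$. Even if a concave summand were merely upper semi-continuous at $0$ rather than continuous, upper semi-continuity of the full objective on the compact feasible set $\mathcal{F}$ still suffices for attainment of the supremum, so the existence conclusion is robust to this edge case and does not require any additional hypothesis beyond those already stated.
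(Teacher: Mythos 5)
Your argument is correct and follows essentially the same route as the paper, whose justification is the single assertion that existence follows from the concavity of $d_{xy}$, $s_{xy}$, $f_x$, the convexity of $c_{xy}$, and feasibility via \eqref{fea:eqn1}--\eqref{fea:eqn2}; you simply make the implicit Weierstrass argument (nonempty, compact, convex feasible set; continuous concave objective) explicit. One minor caveat on your closing remark: a finite concave function on $\mathbb{R}_+$ is automatically \emph{lower} (not upper) semicontinuous at the boundary point $0$, so the semicontinuity direction there is reversed, though this does not affect the conclusion for the continuous function classes (linear, logarithmic, quadratic) the paper has in mind.
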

The existence of the optimal solution is guaranteed by the concavity of $d_{xy}$, $s_{xy}$ and $f_x$ and the convexity of $c_{xy}$, as well as the feasibility of the problem resulting from \eqref{fea:eqn1} and \eqref{fea:eqn2}.

\subsection{Distributed Algorithm}
In this subsection, we aim to develop a distributed algorithm to solve the formulated problem. Our first step is to rewrite the optimization problem in the ADMM form by introducing ancillary variables $\pi_{xy}^{t,d}$ and $\pi_{xy}^{t,s}$. The additional superscripts $d$ and $s$ indicate that the corresponding parameters belong to the destination/target node or the source node, respectively. We then set $\pi_{xy}^t = \pi_{xy}^{t,d}$ and $\pi_{xy}^t = \pi_{xy}^{t,s}$, indicating that the solutions proposed by the targets and sources are consistent with the ones proposed by the central planner. This reformulation facilitates the design of a distributed algorithm which allows us to iterate through the process in obtaining the fair and efficient transport plan. To this end, the reformulated optimal transport problem under fairness consideration is presented as follows:
\begin{align}
\min_{\Pi_d \in \mathcal{F}_d, \Pi_s \in \mathcal{F}_s,\Pi} & -\sum_{t\in\mathcal{T}}\sum_{x\in\mathcal{X}} \sum_{y\in\mathcal{Y}_x} d_{xy}(\pi_{xy}^{t,d}) -\sum_{t\in\mathcal{T}} \sum_{y\in\mathcal{Y}} \sum_{x\in\mathcal{X}_y} (s_{xy}(\pi_{xy}^{t,s}) \notag\\
&- c_{xy}(\pi_{xy}^{t,s}))  -\sum_{x\in\mathcal{X}}\omega_x f_{x}(\sum_{t\in\mathcal{T}}\sum_{y\in\mathcal{Y}_x}\pi_{xy}^{t,d})\notag\\
\mathrm{s.t.}\quad & \pi_{xy}^{t,d} = \pi_{xy}^t,\ \forall t\in\mathcal{T},\ \forall \{x,y\}\in\mathcal{E},\label{OT2:eqn}\\
& \pi_{xy}^t=\pi_{xy}^{t,s},\ \forall t\in\mathcal{T},\ \forall \{x,y\}\in\mathcal{E},\notag
\end{align}
where $\Pi_d:=\{\pi_{xy}^{t,d}\}_{x\in\mathcal{X}_y,y\in\mathcal{Y},t\in\mathcal{T}}$, $\Pi_s:=\{\pi_{xy}^{t,s}\}_{x\in\mathcal{X},y\in\mathcal{Y}_x,t\in\mathcal{T}}$, $\mathcal{F}_d := \{ \Pi_d | \pi_{xy}^{t,d} \geq 0, \underline{p}_x \leq \sum_{t\in\mathcal{T}} \sum_{y \in \mathcal{Y}_x} \pi_{xy}^{t,d} \leq \bar{p}_x,\{x,y\} \in \mathcal{E},t\in\mathcal{T}\}$, and $\mathcal{F}_s := \{ \Pi_s | \pi_{xy}^{t,s} \geq 0, \underline{q}_y \leq \sum_{t\in\mathcal{T}}\sum_{x \in \mathcal{X}_y} \pi_{xy}^{t,s} \leq \bar{q}_y,\{x,y\} \in \mathcal{E},t\in\mathcal{T} \}$.

Note that we transform the original maximization of the social utility problem to an equivalent program of minimizing the aggregated cost. Furthermore, due to the constraints, the optimal solutions of $\Pi_t$, $\Pi_s$, and $\Pi$ to \eqref{OT2:eqn} are the same.
Our next focus is to develop a distributed algorithm to solve the problem \eqref{OT2:eqn}. We let $\alpha_{xy}^{t,s}$ and $\alpha_{xy}^{t,d}$ be the Lagrangian multipliers associated with the constraint $\pi_{xy}^{t,s} = \pi_{xy}^t$ and $\pi_{xy}^t=\pi_{xy}^{t,d}$, respectively. The Lagrangian then facilitates the application of ADMM in the distributed algorithm design. Specifically, the Lagrangian associated with the optimization problem \eqref{OT2:eqn} can then be written as follows:
\begin{align}
    L &\left(\Pi_{d}, \Pi_{s}, \Pi, \alpha_{xy}^{t,s}, \alpha_{xy}^{t,d} \right) = \notag\\
    &- \sum_{t\in\mathcal{T}}\sum_{x\in\mathcal{X}} \sum_{y\in\mathcal{Y}_x} d_{xy}(\pi_{xy}^{t,d}) - \sum_{t\in\mathcal{T}}\sum_{y\in\mathcal{Y}} \sum_{x\in\mathcal{X}_y} \left(s_{xy}(\pi_{xy}^{t,s}) - c_{xy}(\pi_{xy}^{t,s})\right) \notag\\
    & -\sum_{x\in\mathcal{X}}\omega_x f_{x}(\sum_{t\in\mathcal{T}}\sum_{y\in\mathcal{Y}_x}\pi_{xy}^{t,d})+ \sum_{t\in\mathcal{T}}\sum_{x\in\mathcal{X}} \sum_{y\in\mathcal{Y}_x} \alpha_{xy}^{t,d} (\pi_{xy}^{t,d} - \pi_{xy}^t) \notag\\ &+\sum_{t\in\mathcal{T}}\sum_{y\in\mathcal{Y}} \sum_{x\in\mathcal{X}_y} \alpha_{xy}^{t,s} (\pi_{xy}^t - \pi_{xy}^{t,s}) + \frac{\eta}{2} \sum_{t\in\mathcal{T}}\sum_{x\in\mathcal{X}} \sum_{y\in\mathcal{Y}_x} (\pi_{xy}^{t,d} - \pi_{xy}^t)^2 \notag\\
    &+ \frac{\eta}{2} \sum_{t\in\mathcal{T}}\sum_{y\in\mathcal{Y}} \sum_{x\in\mathcal{X}_y} (\pi_{xy}^t - \pi_{xy}^{t,s})^2,\label{Lag:eqn}
\end{align}
where $\eta > 0$ is a positive scalar constant controlling the convergence rate in the algorithm designed below.
In \eqref{Lag:eqn}, the last two terms $\frac{\eta}{2}\sum_{t\in\mathcal{T}} \sum_{x\in\mathcal{X}} \sum_{y\in\mathcal{Y}_x} (\pi_{xy}^{t,d} - \pi_{xy}^t)^2$ and $\frac{\eta}{2} \sum_{t\in\mathcal{T}}\sum_{y\in\mathcal{Y}} \sum_{x\in\mathcal{X}_y} (\pi_{xy}^t - \pi_{xy}^{t,s})^2$, acting as penalization, are quadratic. Hence, the Lagrangian function $L$ is strictly convex, ensuring the existence of a unique optimal solution. 
We can apply ADMM to the minimization problem in \eqref{OT2:eqn}. The designed distributed algorithm is presented in the following proposition.

\begin{proposition}
 The iterative steps of ADMM to \eqref{OT2:eqn} are summarized as follows:
\begin{equation}\label{ADMM1_eqn1}
\begin{split}
    \Pi_{x}^d(k+1) \in \arg \min_{\Pi_{x}^d\in\mathcal{F}_{x}^d} - \sum_{t\in\mathcal{T}}\sum_{y\in\mathcal{Y}_x} d_{xy}(\pi_{xy}^{t,d}) - \omega_x f_{x}(\sum_{t\in\mathcal{T}}\sum_{y\in\mathcal{Y}_x}\pi_{xy}^{t,d}) \\ + \sum_{t\in\mathcal{T}}\sum_{y\in\mathcal{Y}_x} \alpha_{xy}^{t,d}(k) \pi_{xy}^{t,d} + \frac{\eta}{2}\sum_{t\in\mathcal{T}} \sum_{y\in\mathcal{Y}_x} (\pi_{xy}^{t,d} - \pi_{xy}^t(k))^2,
\end{split}
\end{equation}
\begin{equation}\label{ADMM1_eqn2}
\begin{aligned}
        \Pi_{y}^s(k+1) \in \arg \min_{\Pi_{y}^s\in\mathcal{F}_{y}^s} - \sum_{t\in\mathcal{T}}\sum_{x\in\mathcal{X}_y} \left(s_{xy}(\pi_{xy}^{t,s}) - c_{xy}(\pi_{xy}^{t,s})\right) \\ -\sum_{t\in\mathcal{T}}\sum_{x\in\mathcal{X}_y} \alpha_{xy}^{t,s}(k)\pi_{xy}^{t,s} + \frac{\eta}{2} \sum_{t\in\mathcal{T}}\sum_{x\in\mathcal{X}_y} (\pi_{xy}^t(k) - \pi_{xy}^{t,s})^2,
\end{aligned}
\end{equation}
\begin{equation}\label{ADMM1_eqn3}
\begin{split}
    \Pi_{xy}(&k+1)= \arg \min_{\Pi_{xy}} - \sum_{t\in\mathcal{T}} \alpha_{xy}^{t,d}(k)\pi_{xy}^t + \sum_{t\in\mathcal{T}}\alpha_{xy}^{t,s}(k)\pi_{xy}^t \\
    &+\frac{\eta}{2}\sum_{t\in\mathcal{T}}(\pi_{xy}^{t,d}(k+1) - \pi_{xy}^t)^2 + \frac{\eta}{2}\sum_{t\in\mathcal{T}}(\pi_{xy}^t - \pi_{xy}^{t,s}(k+1))^2,
\end{split}
\end{equation}
\begin{equation}\label{ADMM1_eqn4}
\begin{split}
    \alpha_{xy}^{t,d}(k+1) = \alpha_{xy}^{t,d}(k) + \eta(\pi_{xy}^{t,d}(k+1)-\pi_{xy}^t(k+1))^2,
\end{split}
\end{equation}
\begin{equation}\label{ADMM1_eqn5}
\begin{split}
    \alpha_{xy}^{t,s}(k+1) = \alpha_{xy}^{t,s}(k) + \eta(\pi_{xy}^t(k+1)-\pi_{xy}^{t,s}(k+1))^2,
\end{split}
\end{equation}
where $\Pi_{\tilde{x}}^d:=\{\pi_{xy}^{t,d}\}_{y\in\mathcal{Y}_x,x=\tilde{x},t\in\mathcal{T}}$ represents the solution at target node $\tilde{x}\in\mathcal{X}$, $\Pi_{\tilde{y}}^s:=\{\pi_{xy}^{t,s}\}_{x\in\mathcal{X}_y,y=\tilde{y},t\in\mathcal{T}}$ represents the proposed solution at source node $\tilde{y}\in\mathcal{Y}$, and $\Pi_{\tilde{x}\tilde{y}}:=\{\pi_{xy}^{t}\}_{y=\tilde{y},x=\tilde{x},t\in\mathcal{T}}$ includes the solution between $\tilde{x}$ and $\tilde{y}$. In addition, $\mathcal{F}_{x}^d := \{ \Pi_{x}^d | \pi_{xy}^{t,d} \geq 0, y\in\mathcal{Y}_x, t\in\mathcal{T}, \underline{p}_x \leq \sum_{t\in\mathcal{T}}\sum_{y \in \mathcal{Y}_x} \pi_{xy}^{t,d} \leq \bar{p}_x\}$, and $\mathcal{F}_{y}^s := \{ \Pi_{y}^s | \pi_{xy}^{t,s} \geq 0, x\in\mathcal{X}_y, t\in\mathcal{T}, \underline{q}_y \leq \sum_{t\in\mathcal{T}}\sum_{x \in \mathcal{X}_y} \pi_{xy}^{t,s} \leq \bar{q}_y\}$.
\end{proposition}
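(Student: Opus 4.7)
My plan is to derive the five update rules by running the standard three-block ADMM procedure on the augmented Lagrangian $L$ in \eqref{Lag:eqn} and then showing that each primal minimization decomposes across the network so that every step can be executed locally.

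First I would identify the three primal blocks of \eqref{OT2:eqn} as $\Pi_d$, $\Pi_s$, and $\Pi$, with the two coupling equalities $\pi_{xy}^{t,d}=\pi_{xy}^t$ and $\pi_{xy}^t=\pi_{xy}^{t,s}$ absorbed into $L$ through the multipliers $\alpha_{xy}^{t,d}$, $\alpha_{xy}^{t,s}$ and the $\eta$-scaled quadratic penalties. ADMM then prescribes that at iteration $k{+}1$ one minimizes $L$ sequentially over each primal block while holding the others at their current values, and then performs dual ascent on each multiplier; this yields exactly five updates, matching the proposition.

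Next I would derive \eqref{ADMM1_eqn1} by collecting from \eqref{Lag:eqn} only the summands that depend on $\pi_{xy}^{t,d}$: the utility $-d_{xy}(\pi_{xy}^{t,d})$, the fairness piece $-\omega_x f_x(\sum_{t,y\in\mathcal{Y}_x}\pi_{xy}^{t,d})$, the linear term $\alpha_{xy}^{t,d}(k)\pi_{xy}^{t,d}$, and the quadratic penalty $(\eta/2)(\pi_{xy}^{t,d}-\pi_{xy}^t(k))^2$; all other terms are constants with respect to $\Pi_d$ and can be dropped. Because both the fairness summand and the capacity box $\underline{p}_x\le\sum_{t,y\in\mathcal{Y}_x}\pi_{xy}^{t,d}\le\bar{p}_x$ involve only variables tied to a single target $x$, the constrained minimization $\min_{\Pi_d\in\mathcal{F}_d}$ splits into $\sum_{x\in\mathcal{X}}\min_{\Pi_x^d\in\mathcal{F}_x^d}$, which is exactly \eqref{ADMM1_eqn1}. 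A symmetric argument at the supply side, in which the fairness term drops out and the relevant multiplier contribution is $-\alpha_{xy}^{t,s}(k)\pi_{xy}^{t,s}$, produces \eqref{ADMM1_eqn2}. For \eqref{ADMM1_eqn3} the auxiliary variables $\Pi$ carry no side constraints, so the minimization over $\Pi$ decouples not only in $(x,y)$ but also in $t$; retaining only the two linear and two quadratic terms that depend on $\pi_{xy}^t$ gives the stated per-edge quadratic subproblem. Finally, the dual updates \eqref{ADMM1_eqn4}--\eqref{ADMM1_eqn5} follow from standard ADMM dual ascent on each of the two coupling constraints with stepsize $\eta$.

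The main obstacle is verifying the separability that underlies \eqref{ADMM1_eqn1}: the fairness function $f_x$ acts on the aggregate $\sum_{t}\sum_{y\in\mathcal{Y}_x}\pi_{xy}^{t,d}$ and therefore couples the demand-side variables across $t$ and $y$ within a single target, so a naive attempt to decompose per edge or per time slot would be wrong. This is precisely why \eqref{ADMM1_eqn1} is stated at the granularity of a target node rather than per edge, and the analogous point explains the grain of \eqref{ADMM1_eqn2}. Once this is in hand, Assumptions \ref{assump:1}--\ref{assump:2} ensure each block subproblem is convex, the $\eta$-penalty makes the reduced objectives strictly convex (so each $\arg\min$ is well defined), and Lemma \ref{lem:fea} guarantees that the feasible sets $\mathcal{F}_x^d$ and $\mathcal{F}_y^s$ are nonempty, completing the derivation.
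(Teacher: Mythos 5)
Your derivation is correct and lands on the same iterations, but it is organized differently from the paper's proof. The paper does not run three-block ADMM directly: it stacks the variables into $\Vec{x}=[\Vec{\Pi}^{d\mathsf{T}},\Vec{\Pi}^{\mathsf{T}}]^{\mathsf{T}}$ and $\Vec{y}=[\Vec{\Pi}^{\mathsf{T}},\Vec{\Pi}^{s\mathsf{T}}]^{\mathsf{T}}$, writes the coupling constraints as $\mathbf{A}\Vec{x}=\Vec{y}$, and applies the canonical \emph{two-block} ADMM of Boyd et al., afterwards decomposing each block update by the absence of coupling among $\Pi_x^d$, $\Pi_y^s$, $\Pi_{xy}$. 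What that buys is the convergence guarantee, which is cited for the two-block form; your sequential three-block scheme yields the same updates here only because $\Pi_d$ and $\Pi_s$ never appear in a common term of $L$ (they interact only through $\Pi$), so minimizing over them sequentially or jointly is identical --- you should state this decoupling explicitly, since multi-block ADMM does not converge in general and the reduction to two blocks is what makes the convergence claim legitimate. What your write-up buys in exchange is a more careful treatment of the separability that the paper waves through with ``no coupling among \ldots'': in particular your observation that $f_x$ couples all of target $x$'s variables across $t$ and $y\in\mathcal{Y}_x$, forcing the demand update to be per-target rather than per-edge, is exactly the right justification for the granularity of \eqref{ADMM1_eqn1}. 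One discrepancy to flag: standard dual ascent, as you invoke it, gives $\alpha_{xy}^{t,d}(k+1)=\alpha_{xy}^{t,d}(k)+\eta\bigl(\pi_{xy}^{t,d}(k+1)-\pi_{xy}^{t}(k+1)\bigr)$ without the square, whereas \eqref{ADMM1_eqn4}--\eqref{ADMM1_eqn5} as printed square the residual; the squared form appears to be a typographical slip, since the simplification in Proposition \ref{prop:2} substitutes the unsquared residual into the dual updates.
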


\begin{proof}
Let $\Vec{x} = [\Vec{\Pi}^{d\mathsf{T}}, \Vec{\Pi}^\mathsf{T}]^\mathsf{T}$, $\Vec{y} = [\Vec{\Pi}^\mathsf{T}, \Vec{\Pi}^{s\mathsf{T}}]^\mathsf{T}$, and $\alpha = [\{\Vec{\alpha_{xy}^{t,s}}\}^\mathsf{T}, \{\Vec{\alpha_{xy}^{t,d}}\}^\mathsf{T}]^\mathsf{T}$, where $\mathsf{T}$ and $\Vec{}$ denotes the transpose and vectorization operator. Note that these three vectors are all $2T|\mathcal{E}| \times 1$. Now we can write the constraints in \eqref{OT2:eqn} in a matrix form such that $\mathbf{A}\Vec{x} = \vec{y}$, where $\mathbf{A} = [\textbf{I},\textbf{0};\textbf{I},\textbf{0}]$ with $\textbf{I}$ and $\textbf{0}$ denoting the $T|\mathcal{E}|$-dimensional identity and zero matrices, respectively. Next, we note that $\Vec{x} \in \mathcal{F}_{\Vec{x}}^d$ and $\Vec{y} \in \mathcal{F}_{\Vec{y}}^s$, where
$
      \mathcal{F}_{\Vec{x}}^d = \{ \Vec{x} | \pi_{xy}^{t,d} \geq 0, \underline{p}_x \leq \sum_{t\in\mathcal{T}}\sum_{y \in \mathcal{Y}_x} \pi_{xy}^{t,d} \leq \bar{p}_x, \{x,y\} \in \mathcal{E} \},\ 
       \mathcal{F}_{\Vec{y}}^s := \{ \Vec{y} | \pi_{xy}^{t,s} \geq 0, \underline{q}_y \leq \sum_{t\in\mathcal{T}}\sum_{x \in \mathcal{X}_y} \pi_{xy}^{t,s} \leq \bar{q}_y, \{x,y\} \in \mathcal{E}  \}.
$
Then, we can solve \eqref{OT2:eqn} using the iterations: 1)
$
    \Vec{x}(k+1) \in \arg \min_{\Vec{x} \in \mathcal{F}_{\Vec{x}}^d} L(\Vec{x},\Vec{y}(k),\alpha(k));
$
2)
$
    \Vec{y}(k+1) \in \arg \min_{\Vec{y} \in \mathcal{F}_{\Vec{y}}^s} L(\Vec{x}(k+1),\Vec{y},\alpha(k));
$
3)
$
    \alpha(k+1) = \alpha(k) + \eta(A\Vec{x}(k+1) - \Vec{y}(k+1)),
$
whose convergence is proved \cite{boyd2011distributed}.
Because we have no coupling among $\Pi_{x}^d, \Pi_{y}^s, \Pi_{xy}, \alpha_{xy}^{t,d},$ and $\alpha_{xy}^{t,s}$, the above iterations can be equivalently decomposed to  \eqref{ADMM1_eqn1}-\eqref{ADMM1_eqn5}. 
\end{proof}
We can further simplify equations \eqref{ADMM1_eqn1}-\eqref{ADMM1_eqn5} down to four equations, and the results are summarized below.

\begin{proposition}\label{prop:2}
The iterations \eqref{ADMM1_eqn1}-\eqref{ADMM1_eqn5} can be simplified as follows:
\begin{equation}\label{ADMM2_eqn1}
\begin{split}
    \Pi_{x}^d(k+1) \in \arg \min_{\Pi_{x}^d\in\mathcal{F}_{x}^d} - \sum_{t\in\mathcal{T}}\sum_{y\in\mathcal{Y}_x} d_{xy}(\pi_{xy}^{t,d}) - \omega_x f_{x}(\sum_{t\in\mathcal{T}}\sum_{y\in\mathcal{Y}_x}\pi_{xy}^{t,d}) \\ + \sum_{t\in\mathcal{T}}\sum_{y\in\mathcal{Y}_x} \alpha_{xy}^{t}(k) \pi_{xy}^{t,d} + \frac{\eta}{2}\sum_{t\in\mathcal{T}} \sum_{y\in\mathcal{Y}_x} (\pi_{xy}^{t,d} - \pi_{xy}^t(k))^2,
\end{split}
\end{equation}
\begin{equation}\label{ADMM2_eqn2}
\begin{split}
        \Pi_{y}^s(k+1) \in \arg \min_{\Pi_{y}^s\in\mathcal{F}_{y}^s} - \sum_{t\in\mathcal{T}}\sum_{x\in\mathcal{X}_y} \left(s_{xy}(\pi_{xy}^{t,s}) - c_{xy}(\pi_{xy}^{t,s})\right) \\ +\sum_{t\in\mathcal{T}}\sum_{x\in\mathcal{X}_y} \alpha_{xy}^{t}(k)\pi_{xy}^{t,s} + \frac{\eta}{2} \sum_{t\in\mathcal{T}}\sum_{x\in\mathcal{X}_y} (\pi_{xy}^t(k) - \pi_{xy}^{t,s})^2,
\end{split}
\end{equation}
\begin{equation}\label{ADMM2_eqn3}
\begin{split}
    \Pi_{xy}(k+1) = \frac{1}{2} \left(\Pi_{xy}^{d}(k+1) + \Pi_{xy}^{s}(k+1)\right),
\end{split}
\end{equation}
\begin{equation}\label{ADMM2_eqn4}
\begin{split}
    \alpha_{xy}^t(k+1) = \alpha_{xy}^t(k) + \frac{\eta}{2}\left(\pi_{xy}^{t,d}(k+1) - \pi_{xy}^{t,s}(k+1)\right),
\end{split}
\end{equation}
where $\Pi_{xy}^{d}$ and $\Pi_{xy}^{s}$ in \eqref{ADMM2_eqn3} are obtained from \eqref{ADMM2_eqn1} for fixed $y$ and \eqref{ADMM2_eqn2} for fixed $x$, respectively.
\end{proposition}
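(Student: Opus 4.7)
The plan is to recognize that the two Lagrange multipliers $\alpha_{xy}^{t,d}$ and $\alpha_{xy}^{t,s}$ enforce consensus on the same auxiliary variable $\pi_{xy}^t$ and, under symmetric initialization, remain identical throughout the iterations. Once this is established, they collapse into a single multiplier $\alpha_{xy}^t$, and the consensus update \eqref{ADMM1_eqn3} reduces to a plain arithmetic average.

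First, I would solve \eqref{ADMM1_eqn3} in closed form. Since the minimization in $\pi_{xy}^t$ is unconstrained, separable across $(x,y,t)$, and strictly convex, setting the partial derivative to zero gives
\begin{equation*}
\pi_{xy}^t(k{+}1) = \tfrac{1}{2}\bigl(\pi_{xy}^{t,d}(k{+}1) + \pi_{xy}^{t,s}(k{+}1)\bigr) + \tfrac{1}{2\eta}\bigl(\alpha_{xy}^{t,d}(k) - \alpha_{xy}^{t,s}(k)\bigr).
\end{equation*}

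Second, I would verify by induction on $k$ that $\alpha_{xy}^{t,d}(k) = \alpha_{xy}^{t,s}(k)$ under the canonical zero initialization $\alpha_{xy}^{t,d}(0) = \alpha_{xy}^{t,s}(0) = 0$. Assuming equality at step $k$, the closed form above collapses to $\pi_{xy}^t(k{+}1) = \tfrac{1}{2}(\pi_{xy}^{t,d}(k{+}1) + \pi_{xy}^{t,s}(k{+}1))$, so the two primal residuals driving \eqref{ADMM1_eqn4} and \eqref{ADMM1_eqn5} coincide:
\begin{equation*}
\pi_{xy}^{t,d}(k{+}1) - \pi_{xy}^t(k{+}1) \;=\; \pi_{xy}^t(k{+}1) - \pi_{xy}^{t,s}(k{+}1) \;=\; \tfrac{1}{2}\bigl(\pi_{xy}^{t,d}(k{+}1) - \pi_{xy}^{t,s}(k{+}1)\bigr).
\end{equation*}
The two dual updates therefore add the same increment to $\alpha_{xy}^{t,d}$ and $\alpha_{xy}^{t,s}$, preserving equality at step $k{+}1$.

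Third, defining the unified multiplier $\alpha_{xy}^t(k) := \alpha_{xy}^{t,d}(k) = \alpha_{xy}^{t,s}(k)$, I would substitute it back into \eqref{ADMM1_eqn1}--\eqref{ADMM1_eqn5}. The target subproblem \eqref{ADMM1_eqn1} and the source subproblem \eqref{ADMM1_eqn2} become \eqref{ADMM2_eqn1} and \eqref{ADMM2_eqn2} respectively, once the opposite sign conventions under which $\alpha_{xy}^{t,d}$ and $\alpha_{xy}^{t,s}$ enter the Lagrangian \eqref{Lag:eqn} are accounted for. The consensus step collapses to the arithmetic mean \eqref{ADMM2_eqn3}, and a single dual update \eqref{ADMM2_eqn4} replaces both \eqref{ADMM1_eqn4} and \eqref{ADMM1_eqn5} using the common residual $\tfrac{1}{2}(\pi_{xy}^{t,d} - \pi_{xy}^{t,s})$.

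The main obstacle is the inductive step: the entire simplification hinges on the symmetric initialization, and this assumption must be stated and then propagated through each iteration using the closed-form expression for $\pi_{xy}^t(k{+}1)$. Once the two multipliers are shown to be identical for all $k$, the remaining reductions amount to direct substitution and can be read off mechanically.
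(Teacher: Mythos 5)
Your proposal is correct and follows essentially the same route as the paper: solve the consensus step \eqref{ADMM1_eqn3} by the first-order condition, observe that the two multipliers coincide so that $\pi_{xy}^t(k+1)$ collapses to the arithmetic mean, and merge the two dual updates into one. The only cosmetic difference is that you establish $\alpha_{xy}^{t,d}(k)=\alpha_{xy}^{t,s}(k)$ by induction from a symmetric (zero) initialization, whereas the paper substitutes the closed-form $\pi_{xy}^t(k+1)$ into \eqref{ADMM1_eqn4}--\eqref{ADMM1_eqn5} and notes that both multipliers are then given by the identical expression $\tfrac{1}{2}(\alpha_{xy}^{t,d}(k)+\alpha_{xy}^{t,s}(k))+\tfrac{\eta}{2}(\pi_{xy}^{t,d}(k+1)-\pi_{xy}^{t,s}(k+1))$, so equality holds from the first update onward without any initialization assumption.
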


\begin{proof}
As \eqref{ADMM1_eqn3} is strictly concave, we can solve it by first-order condition:
$
    \pi_{xy}^t(k+1) = \frac{1}{2\eta}(\alpha_{xy}^{t,d}(k) - \alpha_{xy}^{t,s}(k)) + \frac{1}{2}(\pi_{xy}^{t,d}(k+1) + \pi_{xy}^{t,s}(k+1)).
$
By substituting the above equation into  \eqref{ADMM1_eqn4} and \eqref{ADMM1_eqn5} we get:
$
    \alpha_{xy}^{t,d}(k+1) = \frac{1}{2}(\alpha_{xy}^{t,d}(k) + \alpha_{xy}^{t,s}(k)) + \frac{\eta}{2}(\pi_{xy}^{t,d}(k+1) - \pi_{xy}^{t,s}(k+1)),
$
$
    \alpha_{xy}^{t,s}(k+1) = \frac{1}{2}(\alpha_{xy}^{t,d}(k) + \alpha_{xy}^{t,s}(k)) + \frac{\eta}{2}(\pi_{xy}^{t,d}(k+1) - \pi_{xy}^{t,s}(k+1)).
$
We can see that $\alpha_{xy}^{t,d} = \alpha_{xy}^{t,s}$ during each update. Hence, $\pi_{xy}^t(k+1)$ can be further simplified as $\pi_{xy}^t(k+1) = \frac{1}{2} (\pi_{xy}^{t,d}(k+1) + \pi_{xy}^{t,s}(k+1))$ shown in \eqref{ADMM2_eqn3}. In addition, we can achieve \eqref{ADMM1_eqn4} and \eqref{ADMM1_eqn5} from  $\alpha_{xy}^{t,d} = \alpha_{xy}^{t,s} = \alpha_{xy}^t$ represented in \eqref{ADMM2_eqn4}.
\end{proof}

We can iterate through equations \eqref{ADMM2_eqn1}-\eqref{ADMM2_eqn4} to obtain a fair and efficient resource transport strategy until getting a convergence. Note that the fairness is explicitly considered during the solution updates, which can be seen from the ADMM iteration step \eqref{ADMM2_eqn1}. For convenience, we summarize the iterations in Proposition \ref{prop:2}  in Algorithm \ref{Alg:1}.

\begin{algorithm}[!t]
\caption{Distributed Algorithm}\label{Alg:1}
\begin{algorithmic}[1]
\While {$\Pi_{x}^d$ and $\Pi_{y}^s$ not converging}
\State Compute $\Pi_{x}^d(k+1)$  using \eqref{ADMM2_eqn1},  $\forall x\in\mathcal{X}_y$
\State Compute $\Pi_{y}^s(k+1)$  using \eqref{ADMM2_eqn2}, $\forall y\in\mathcal{Y}_x$
\State Compute $\Pi_{xy}(k+1)$  using \eqref{ADMM2_eqn3}, $\forall \{x,y\}\in \mathcal{E}$
\State Compute $\alpha_{xy}^t(k+1)$  using \eqref{ADMM2_eqn4}, $\forall t\in\mathcal{T}$, $\forall \{x,y\}\in \mathcal{E}$
\EndWhile
\State \textbf{return} $\pi_{xy}^t(k+1)$, $\forall t\in\mathcal{T}$, $\forall \{x,y\}\in \mathcal{E}$
\end{algorithmic}
\end{algorithm}

\section{Discussions on the Distributed Algorithm}\label{sec:discussion}
In this section, we discuss several crucial aspects of the proposed distributed algorithm for fair and efficient resource allocation mechanisms.

 \subsection{Fairness and Efficiency Trade-off}
The fairness of the transport scheme is ensured during the updates of solutions. As shown in \eqref{ADMM2_eqn1}, the level of fairness is regulated by the parameter $\omega_x$, $x\in\mathcal{X}$. Specifically, $\omega_x$ trades off between the efficiency and fairness of the transport strategy. With a larger $\omega_x$, the fairness term has a more significant impact on the solution, yielding a fairer resource allocation plan but in turn the plan is less efficient. For every target $x$, it maximizes $f_{x}(\sum_{t\in\mathcal{T}}\sum_{y\in\mathcal{Y}_x}\pi_{xy}^{t,d})$ at each step. The concavity of $f_x$ guarantees that it is impossible for a single target in the network to receive all the resources. Together with the penalization terms $\sum_{t\in\mathcal{T}}\sum_{y\in\mathcal{Y}_x} \alpha_{xy}^{t}(k) \pi_{xy}^{t,d} + \frac{\eta}{2}\sum_{t\in\mathcal{T}} \sum_{y\in\mathcal{Y}_x} (\pi_{xy}^{t,d} - \pi_{xy}^t(k))^2$, it also ensures that the request for resources from each target $x$ will not be arbitrarily large.


\subsection{Implementation of Fairness }
In the reformulated problem \eqref{OT2:eqn}, we associated the fairness function, $f_x$, $\forall x\in\mathcal{X}$, with the corresponding target node. This leads to natural interpretations that when proposing the transport strategy, each target needs to be aware of the fairness of the resource allocation over networks. In a resource distribution market, the supplier (source node) may not care where its resources are allocated in the end. However, a target cares whether it gets more or less resources than another target. For example, if a large company is distributing resources to customers, the company does not care where their product goes, while consumers care if only a few consumers are able to obtain the product. This observation is consistent with the iteration steps \eqref{ADMM2_eqn1} and \eqref{ADMM2_eqn2}, where each target $x$ aims to maximize the fairness term $\omega_x f_{x}(\sum_{t\in\mathcal{T}}\sum_{y\in\mathcal{Y}_x}\pi_{xy}^{t,d}) $, while each source $y$ merely maximizes its own utility. 
Note that during the problem reformulation, the fairness term could also be applied to the source. 
We can design distributed algorithm to solve this reformulated problem using similar techniques as in Section \ref{sec:algorithm}. 

\subsection{Continuous and Distributed Resource Allocation}\label{subsec:online}
In Algorithm \ref{Alg:1}, all the participants (sources and targets) updates their decisions on the transferred/requested resources iteratively in a distributed fashion. In a resource distribution market, the number of participants and their preferences can vary over time. For example, some suppliers will leave the market when they finish the allocation of their resources. Similarly, new target nodes may join the market when they need to purchase resources. Hence, it is necessary to devise a continuous resource allocation mechanism that is adaptive to the changes in the market.
We can extend the Algorithm \ref{Alg:1} in this regard and implement it in an online form. Specifically, when there are changes in the market, we can continue to solve the optimal transport problem by using Algorithm \ref{Alg:1} with necessary updates resulting from the market changes. In this way, we do not need to recompute the fair and efficient transport strategy for the new scenario from scratch. The algorithm will take into account these changes inherently and continuously update the solution in a distributed way. 
We will illustrate the continuous resource allocation with a case study in Section \ref{sec:case}.

\section{Case Studies}\label{sec:case}
In this section, we corroborate our algorithm for distributed optimal transport with fairness consideration. We consider a scenario with five target nodes and two source nodes and a transport network structure connecting all source nodes to both target nodes.  The upper bounds are $\bar{q}_1 = 2$, $\bar{q}_2 = 3$, $\bar{q}_3 = 4$, $\bar{q}_4 = 3$, $\bar{q}_5 = 2$, $\bar{p}_1 = 4$, and $\bar{p}_2 = 4$.
The lower bounds, $\underline{q}_y$ and $\underline{p}_x$ are 0 for all nodes. For illustration simplicity, we consider the resource allocation over a single period, i.e., $T=1$. Thus, we omit the time index $t$ in the notations in case studies. Furthermore, we adopt the following linear utility and cost functions: $d_{xy}(\pi_{xy}) = \delta_{xy}\pi_{xy}$, $s_{xy}(\pi_{xy}) = \sigma_{xy}\pi_{xy}$ and $c_{xy}(\pi_{xy}) = \zeta_{xy}\pi_{xy}$, $\forall \{x,y\} \in \mathcal{E}$. The corresponding parameters are selected as:
$
    [\zeta_{xy}]_{x\in\mathcal{X},y\in\mathcal{Y}} = \begin{bmatrix} 1 & 2 & 1 & 2 & 1 \\ 2 & 1 & 3 & 1 & 2\end{bmatrix}.
$
We consider proportional fairness in the resource allocation, i.e., the fairness function admits the form shown in \eqref{fairness:eqn}. 


\subsection{Fair and Distributed Resource Allocation}
We first show the effectiveness of the designed Algorithm \ref{Alg:1}.
Specifically, we compare the optimal transport strategies with and without fairness considerations using Algorithm \ref{Alg:1}. For the algorithm with fairness, we set the weighting factor $\omega_x = 3$. We focus on comparing their induced social utility. The social utility is the aggregate of the payoffs of the sources and targets and the benefits of fairness in resource allocation. The results are shown in Fig. \ref{fig:f1}. Fig. \ref{fig:f1_1} indicates that the distributed algorithm (both with and without fairness consideration) converges to the corresponding centralized optimal solution $\pi_{xy}^o$ (i.e., problem \eqref{OT2:eqn} is solved directly). We also observe in Fig. \ref{fig:f1_1} that the algorithm with fairness converges to a higher social utility. The increase in the social utility is due to the addition of fairness when designing the resource transport scheme. We also note that the fairness has little effect on the convergence of the algorithm. Fig. \ref{fig:f1_2} shows the residual of transport strategy. The residual measures the difference between the strategy at the current update and the centralized optimal solution. We can observe that the residual goes to 0 around $k=50$, which demonstrates the effectiveness of the designed distributed algorithm.

After verifying that the algorithm works and increases social utility, we further verify that it is efficient in computing the transport strategy. In a case with 20 source nodes and 20 target nodes, the designed distributed algorithm takes just over two minutes which is considerably good given that there are $20\times 20$ connections with the transport scheme where all source nodes are connected to all target nodes.


\begin{figure}[!t] 
\centering
\subfigure[Social utility ]{\includegraphics[width=0.48\columnwidth]{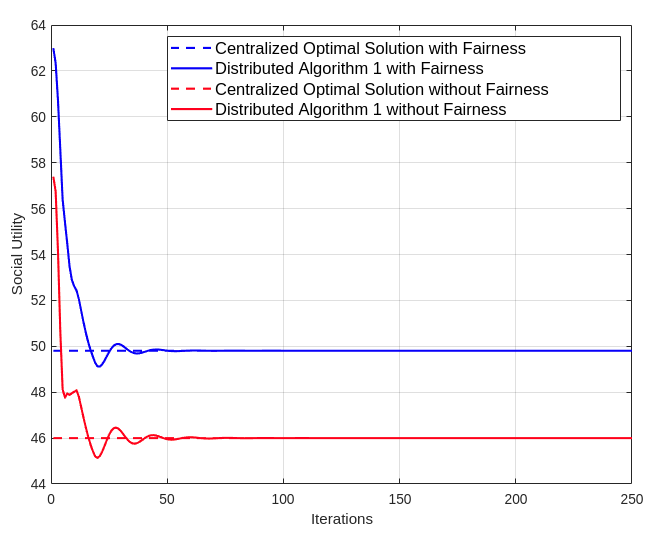}\label{fig:f1_1}}
\subfigure[$\sqrt{\sum_{x\in\mathcal{X}}\sum_{y\in\mathcal{Y}_x}(\pi_{xy}(k) - \pi_{xy}^o)^2}$]{\includegraphics[width=0.5\columnwidth]{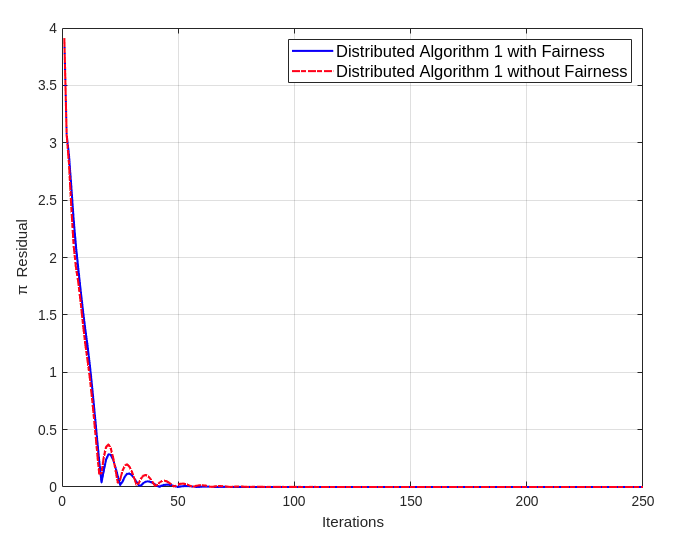}\label{fig:f1_2}}
\caption{Impact of fairness consideration on the transport strategy design using Algorithm \ref{Alg:1}. (a) and (b) depict the trajectories of social utility and residual of transport strategy, respectively.}
\label{fig:f1}
\vspace{-4mm}
\end{figure}

\subsection{Online Distributed Resource Allocation}
Next, we investigate a case study using the discussed continuous, or online, distributed algorithm in Section \ref{subsec:online}. We adopt the same utility, cost and fairness functions as in the previous scenario. In this case, the resource allocation network changes over time as shown in Fig. \ref{fig:f2}. Specifically, there are three sources and two targets at $k=0$, and not all of which are connected, i.e., the bipartite graph is incomplete. At step $k=250$, one target node joins the network, and hence the network has three source nodes and three target nodes. At step $k=500$, one source node leaves the network, and hence two source nodes needs to satisfy the requests from three target nodes. When the resource allocation network is changed, the upper bounds on the amount of transferable resources at sources and the amount of sources requested at the targets are also updated, with each parameter shown in Fig. \ref{fig:f2}. The weighting constant on the fairness is chosen as $\omega_x = 3$, $\forall x\in\mathcal{X}$, throughout the case study. Other parameters are summarized in Table \ref{tab:parameters}. The online algorithm addresses the problem continuously without resetting the algorithm. The results are shown in Fig. \ref{fig:f2}. When the resource transport network changes (at $k=250,\ 500$), the online algorithm will respond to these changes quickly by proposing new allocation schemes. The solutions obtained from the online distributed algorithm are consistent with the centralized optimal solutions. Thus, this online distributed algorithm is applicable to the resource distribution market with frequent changes.

\begin{figure}[!t] 
\centering
\subfigure[$k = 0$]{\includegraphics[width=0.3\columnwidth]{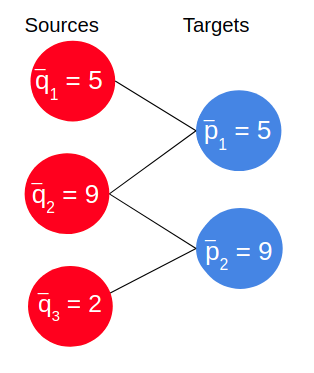}\label{fig:net_on1}}
\subfigure[$k = 250$]{\includegraphics[width=0.3\columnwidth]{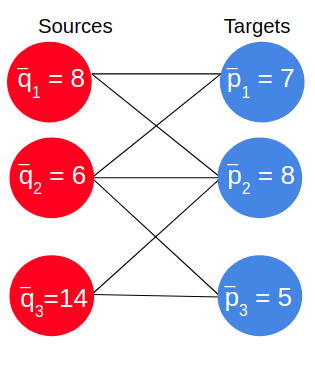}\label{fig:net_on2}}
\subfigure[$k = 500$]{\includegraphics[width=0.3\columnwidth]{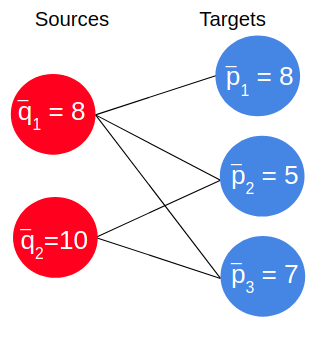}\label{fig:net_on3}}
\caption{Network structures for the continuous/online resource allocation. }
\label{fig:online_net}
\vspace{-3mm}
\end{figure}

\begin{table}
\centering
\caption{Parameters in the Online Distributed Resource Allocation}
\label{tab:parameters}
\begin{tabular}{ |c|c|c|c|c| } 
\hline
\multirow{5}{4em}{$k=0$} & $\delta_{11}=2$ & $\delta_{21}=3$ & $\delta_{22}=4$ & $\delta_{32}=4$ \\ 
& $\sigma_{11}=7$ & $\sigma_{21}=8$ & $\sigma_{22}=6$ & $\sigma_{32}=5$ \\
& $\zeta_{11}=2$ & $\zeta_{21}=4$ & $\zeta_{22}=2$ & $\zeta_{32}=2$ \\
& $\delta_{13}=0$ & $\sigma_{13} = 0$ & $\zeta_{13} = 0 $ & $\delta_{31} = 0$ \\
& $\sigma_{31} = 0$ &$\zeta_{31} = 0$ & &\\
\hline
\multirow{7}{4em}{$k=250$} & $\delta_{11}=4$ & $\delta_{12}=2$ & $\delta_{13}=0$ & $\delta_{21}=3$ \\ 
& $\delta_{22}=5$ & $\delta_{23}=4$ & $\delta_{31}=0$ & $\delta_{32}=6$ \\
& $\delta_{33}=2$ & $\sigma_{11}=8$ & $\sigma_{12}=14$ & $\sigma_{13}=0$ \\
& $\sigma_{21}=7$ & $\sigma_{22}=10$ & $\sigma_{23}=9$ & $\sigma_{31}=0$ \\
& $\sigma_{32}=12$ & $\sigma_{33}=4$ & $\zeta_{11}=2$ & $\zeta_{12}=10$ \\
& $\zeta_{13}=0$ & $\zeta_{21}=4$ & $\zeta_{22}=5$ & $\zeta_{23}=5$ \\
& $\zeta_{31}=0$ & $\zeta_{32}=6$ & $\zeta_{33}=2$ & \\
\hline
\multirow{5}{4em}{$k=500$} & $\delta_{11}=3$ & $\delta_{12}=2$ & $\delta_{13}=5$ & $\delta_{22}=3$ \\ 
& $\delta_{23}=3$ & $\sigma_{11}=5$ & $\sigma_{12}=7$ & $\sigma_{13}=5$ \\
& $\sigma_{22}=7$ & $\sigma_{23}=4$ & $\zeta_{11}=2$ & $\zeta_{12}=2$ \\
& $\zeta_{13}=1$ & $\zeta_{22}=1$ & $\zeta_{23}=2$ & $\delta_{21} = 0$\\
& $\sigma_{21} = 0$ & $\zeta_{21} = 0$ & & \\
\hline
\end{tabular}
\vspace{-5mm}
\end{table}

\begin{figure}[!t] 
\centering
\subfigure[Social utility  ]{\includegraphics[width=0.48\columnwidth]{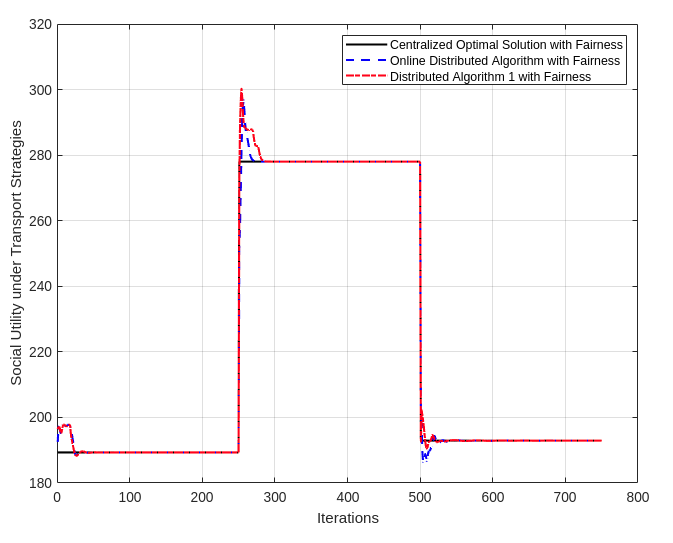}\label{fig:f2_1}}
\subfigure[$\sqrt{\sum_{x\in\mathcal{X}}\sum_{y\in\mathcal{Y}_x}(\pi_{xy}(k) - \pi_{xy}^o)^2}$]{\includegraphics[width=0.49\columnwidth]{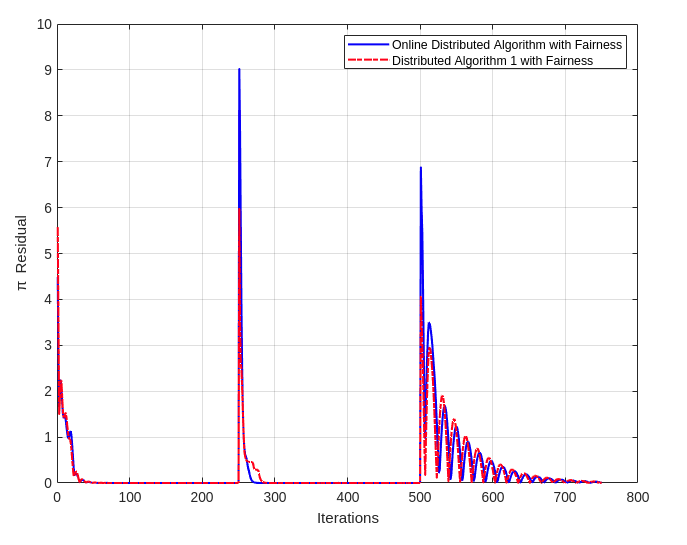}\label{fig:f2_2}}
\caption{Adaptive fair and efficient transport strategies design using the online algorithm. The transport network structure and participants preferences change over time at $k=250,\ 500$. (a) and (b) depict the trajectories of social utility and residual of transport strategy, respectively.}
\label{fig:f2}
\vspace{-5mm}
\end{figure}



\section{Conclusion}\label{sec:conclusion}
In this paper, we have investigated fair and efficient dynamic transport of limited amount of resources in a network of participants with various preferences. The designed distributed algorithm can successfully yield the identical transport plan designed under the centralized manner, making our algorithm applicable to large-scale networks. The fairness is explicitly promoted in the algorithm, through bargaining and negotiations between each pair of resource supplier (source) and resource receiver (target). Throughout the negotiation, the sources maximize their revenue but need to consider the fairness. Similarly, the targets optimize the fairness but should take into account the efficiency of resource allocation. The algorithm terminates when the two parties reach a consensus.  Further work includes the investigation of fair resource transport under incomplete information between two parties.



\bibliographystyle{IEEEtran}
\bibliography{references.bib}

\end{document}